\newcommand{\nc}{\newcommand}
 \nc{\aff}{\mathfrak{aff} } \nc{\bb}{\mathfrak{b} }
\nc{\cc}{\mathfrak{c} }  \nc{\dd}{\mathfrak{d} }
 \nc{\ggo}{\mathfrak{g} }
 \nc{\hh}{\mathfrak{h} }  \nc{\ii}{\mathfrak{i} }
 \nc{\jj}{\mathfrak{j} }  \nc{\kk}{\mathfrak{k} }
\nc{\mm}{\mathfrak{m} }   \nc{\nn}{\mathfrak{n} }
\nc{\pp}{\mathfrak{p} }  \nc{\rr}{\mathfrak{r} } \nc{\sg}{\mathfrak{s} }
 \nc{\sog}{\mathfrak{so} }  \nc{\spg}{\mathfrak{sp} }
 \nc{\sug}{\mathfrak{su} }  \nc{\slg}{\mathfrak{sl} }
 \nc{\tg}{\mathfrak{t} }  \nc{\uu}{\mathfrak{u} }
 \nc{\vv}{\mathfrak{v} } \nc{\ww}{\mathfrak{w} }
 \nc{\zz}{\mathfrak{z} }
 \nc{\ggob}{\overline{\mathfrak{g}}}
\nc{\glg}{\mathfrak{gl} }
\nc{\pca}{\mathcal{P}} \nc{\nca}{\mathcal{N}}
 \nc{\vp}{\varphi} \nc{\ddt}{\frac{{\rm d}}{{\rm d}t}}
 \nc{\la}{\langle} \nc{\ra}{\rangle}
 \nc{\SO}{{\sf SO}} \nc{\Spe}{{\sf Sp}} \nc{\Sl}{{\sf Sl}}
 \nc{\SU}{{\sf SU}} \nc{\Or}{{\sf O}} \nc{\U}{{\sf U}}
 \nc{\Gl}{{\sf Gl}} \nc{\Se}{{\sf S}} \nc{\Cl}{{\sf Cl}}
 \nc{\Spin}{{\sf Spin}} \nc{\Pin}{{\sf Pin}}
 \nc{\RR}{{\mathbb R}} \nc{\HH}{{\mathbb H}} \nc{\CC}{{\mathbb C}}
 \nc{\ZZ}{{\mathbb Z}} \nc{\FF}{{\mathbb F}} \nc{\NN}{{\mathbb N}}
 \nc{\GG}{{\mathbb G}} \nc{\JJ}{{\mathbb J}} \nc{\II}{{\mathbb I}}
 \nc{\KK}{{\mathbb K}} \nc{\DD}{{\mathbb D}}
 \nc{\ad}{\operatorname{ad}} \nc{\Ad}{\operatorname{Ad}}
 \nc{\coad}{\operatorname{coad}} \nc{\ct}{\operatorname{T}}
 \nc{\rank}{\operatorname{rank}} \nc{\Irr}{\operatorname{Irr}}
 \nc{\End}{\operatorname{End}} \nc{\Aut}{\operatorname{Aut}}
 \nc{\Inn}{\operatorname{Inn}} \nc{\Der}{\operatorname{Der}}
 \nc{\Dera}{\operatorname{Dera}} \nc{\Auto}{\operatorname{Auto}}
 \nc{\GL}{\operatorname{GL}}
 \nc{\SL}{\operatorname{SL}}
 \theoremstyle{plain}
 \newtheorem{thm}{Theorem}[section]
 \newtheorem{cor}[thm]{Corollary}
 \newtheorem{lem}[thm]{Lemma}
 \theoremstyle{remark}
 \newtheorem*{remark}{Remark}
 \newtheorem{example}[thm]{Example}
 \newcommand{\R}{\mathbb R}
\newcommand{\mg}{\mathfrak n }
\newcommand{\mz}{\mathfrak z }
\newcommand{\mgg}{\mathfrak g }
\newcommand{\lra}{\longrightarrow}
\begin{document}

\title[Symplectic structures on free nilpotent Lie algebras]
{Symplectic structures on free nilpotent Lie algebras}

\author{Viviana J. del Barco}
\address{ECEN-FCEIA, Universidad Nacional de Rosario, Pellegrini 250, 2000 Rosario, Argentina.
}

\email{V. del Barco: delbarc@fceia.unr.edu.ar}

%    author two information

\thanks{Supported by a fellowship from CONICET and research grants form SCyT-UNR, Secyt-UNC}
\thanks{Keywords: Free nilpotent Lie algebra, symplectic structures.}
\thanks{MSC 2000: 53D05 17B01  17B30  22E25.} 
%\date{\today}

\dedicatory{}

%    "Communicated by" -- provide editor's name; required.
\commby{}

%    Abstract is required.

\begin{abstract} In this work we study the problem of existence of symplectic structures on free nilpotent Lie algebras. Necessary and sufficient conditions are given for even dimensional ones. The one dimensional central extension for odd dimensional free nilpotent Lie algebras is also considered.
\end{abstract}

\maketitle

\section{Introduction}

%Given a nilpotent Lie algebra $\mgg$, a symplectic structure is a skew symmetric bilinear form $\omega$ that is also closed by the Lie algebra differential. Consider $G$ the connected simply connected nilpotent Lie group corresponding to $\mgg$. Then, such an structure is an closed 2-form on $N$ invariant by the group action. 

A symplectic structure on a $2n$-dimensional differentiable manifold $M$ is a closed 2-form $\omega$ such that $\omega^n$ is non singular. %In particular, the cohomology class of $\omega$ is non trivial if $M$ is compact.
In the particular case of a nilmanifold $M=\Gamma\backslash G$, that is, a compact quotient of a nilpotent simply connected Lie group by a cocompact discrete subgroup $\Gamma$, the natural map from $H^i (\mgg)$, $\mgg$ the Lie algebra of $G$, to the de Rham cohomology group $H^i(M,\R)$ is an isomorphism, $0 \leq i \leq 2n$ by Nomizu's work (\cite{NO}). Thus, any symplectic form on $M$ is cohomologous to a left invariant form on $G$ and hence it is represented by a closed 2-form on the Lie algebra $\mgg$.

Therefore, the study of existence of symplectic structures on the nilmanifold $\Gamma \backslash G$ reduces to the existence of a closed 2-form on the Lie algebra $\mgg$ such that  $\omega^n\neq 0$.

%Nilmanifolds itselfs are widely used in topology and geometry, usually as they gave different counter-examples in symplectic and complex topology. The first examples of compact symplectic manifolds with no K\"A hler structure were nilmanifolds (see [25], [7] for references)(millionschikov)

One possible approach to this existence problem is to perform a study case by case in those dimensions where nilpotent Lie algebras are classified. For instance, Morosov described all nilpotent Lie algebras up to dimension six in \cite{MO} and his tables were used by Goze and Bouyakoub in \cite{BO-GO} to give the list of all symplectic Lie algebras of dimension $\leq 6$. %The lack of of a complete classification of nilpotent Lie algebras makes this problem quite difficult. For instance, Morosov described all nilpotent Lie algebras up to dimension six in \cite{MO} and his tables were used by Goze and Bouyakoub in \cite{BO-GO} for proving that all nilpotent Lie algebras of dimension four are symplectic. They also  showed those of dimension six admitting symplectic structures and pointed out that it is the first dimension where a non symplectic nilpotent Lie algebra can be found.
Another alternative is to treat the problem separately on subfamilies of nilpotent Lie algebras. For example, in \cite{DO-TI} the authors work with Heisenberg type nilpotent Lie algebras. Moreover, the classification of symplectic filiform Lie algebras, which are Lie algebras $\mgg$ of nilpotency index $k=\dim \mgg-1$, is given in \cite{MI}. Among nilpotent Lie algebras associated to graphs, a complete description of the symplectic ones can be made in terms of the corresponding graph (\cite{PO-TI}). 

Following the second approach, we restrict ourselves to the family of free nilpotent Lie algebras. The goal of this work is to give an explicit description of those Lie algebras in the family that admit symplectic structures. More precisely, we prove the following result.
\bigskip

{\bf Main Theorem.} {\em Let $\mg_{m,k}$ be the free $k$-step nilpotent Lie algebra on $m$ generators. 
\begin{enumerate}
\item If $\dim \mg_{m,k}$ is even then $\mg_{m,k}$ admits symplectic structures if and only if $(m,k)=(3,2)$.
\item If $\,\dim \mg_{m,k}$ is odd, the one dimensional central extension $\R\oplus \mg_{m,k}$ admits symplectic structures if and only if $(m,k)=(2,2)$.
\end{enumerate}}

%Moreover, there are no general sufficient conditions for a nilpotent Lie algebra to admit symplectic structures. So problem is usually treated separately by families using different tools.  In \cite{DO-TI} the authors work with Heisenberg type nilmanifolds. The classification of symplectic filiform Lie algebras, which are Lie algebras $\mgg$ of nilpotency index $k=\dim \mgg-1$, a classification is given in \cite{MI}. Among nilpotent Lie algebras associated to graphs, a complete description of the symplectic ones can be made in terms of the corresponding graph (\cite{PO-TI}).

% The goal of this work is to give a complete answer for the existence of symplectic structures in the family of free nilpotent Lie algebras. 
\smallskip

In \cite{BE-GO} Benson and Gordon give a necessary condition for the existence of symplectic structures on nilpotent Lie algebras. We develop that condition for free nilpotent Lie algebras using the Hall basis, throughout which we obtain specific restriction in that family enabling the proof of the Theorem above.

 %To manage free nilpotent Lie algebras, our main tool is the Hall basis constructed from a set of generators. This allow us to prove that the necessary condition cannot be satisfied by any one dimensional central extension of an odd dimensional free nilpotent Lie algebra, neither a free nilpotent Lie algebra of even dimension, with the exception of % Properties of free nilpotent Lie algebras give that only one even dimensional free nilpotent Lie algebra does verify that condition, namely 
%the Lie algebras in the Theorem. Finally, we exhibit a symplectic structure for each of them. %To complete the work, we deal with one dimensional central extensions of those free nilpotent Lie algebras of odd dimensions: none of them admit symplectic structures.

\section{Free nilpotent Lie algebras}

Let $\mgg$ denote a real Lie algebra. The sequence of ideals of $\mgg$, $\{C^r(\mgg)\}$, which for non-negative integers $r$ is given by
$$
C^0(\mgg)=\mgg,\qquad C^r(\mgg)=[\mgg,C^{r-1}(\mgg)] 
$$
is called the central descending series of $\mgg$.
\smallskip

A Lie algebra $\mgg$ is called \emph{k-step nilpotent} if $C^k(\mgg)=\{0\}$ but $C^{k-1}(\mgg)\neq \{0\}$ and in this case $C^{k-1}(\mgg)\subseteq \mz(\mgg)$, where $\mz(\mgg)$ denotes the center of the Lie algebra.

A particular family of nilpotent Lie algebras is constituted by the free nilpotent ones.

\smallskip
Let $\mathfrak f_m$ denote the free Lie algebra on $m$ generators, $m\geq 2$ (notice that a unique  element spans an abelian Lie algebra). The quotient Lie algebra $\mg_{m,k}=\mathfrak f_m/C^{k+1}(\mathfrak f_m)$ is the \emph{free $k$-step nilpotent} Lie algebra on $m$ generators $\mg_{m,k}$. The image of a generator set of $\mathfrak f_m$ by the quotient map induces what is called a {\em generator set} of $\mg_{m,k}$. To each ordered set of generators $\{e_1,\ldots,e_m\}$ there is associated a basis of $\mg_{m,k}$, called a {\em Hall basis} (see \cite{Ha,GG}). Its construction is as follows.

%The \emph{free $k$-step nilpotent} Lie algebra on $m$ generators $\mg_{m,k}$ is defined as the quotient Lie algebra $\mg_{m,k}=\mathfrak f_m/C^{k+1}(\mathfrak f_m)$ where $\mathfrak f_m$ denotes the free Lie algebra on $m$ generators, with $m\geq 2$. (Notice that a unique  element spans an abelian Lie algebra). Each ordered set of generators $\{e_1,\ldots,e_m\}$ has a basis of $\mg_{m,k}$ associated that is called a {\em Hall basis} (see \cite{Ha,GG}). Its construction is as follows.

Define the \emph{length} $\ell$ of each  generator as $1$. Take the Lie brackets $[e_i, e_j]$ for $i>j$, which by definition satisfies $\ell([e_i,e_j])=2$. Now the elements $e_1, \dots , e_m$, $[e_i, e_j]$, $i>j$ belong to the Hall basis. Define a total order in that set by extending the order of the set of  generators and so that $E>F$ if $\ell(E)>\ell(F)$. They allow the construction of the elements of length $3$ and so on. 

Recursively each element of the Hall basis of $\mg_{m,k}$ is defined as follows. The generators $e_1,\ldots,e_m$ are elements of the basis  of length 1. Assume we have defined basic elements of lengths $1,\ldots, r-1\leq k-1$, with a total order satisfying  $E > F$ if $\ell(E) > \ell(F)$.

If $\ell(E) =s$ and $\ell(F) = t$ and $r = s + t\leq k$, then $[E, F]$ is a basic element of length $r$ if both of the following conditions hold:

\begin{enumerate}
\item $E$ and $F$ are basis elements and $E>F$, and
\item if $\ell(E)>1$ and $E=[G,H]$ is the unique decomposition with $G,H$ basic elements, then $F\geq H$.
\end{enumerate}

Fixed a Hall basis, denote by $\mathfrak p(m,s)$ the subspace spanned by the elements of the basis of length $s$. Hence $\mg_{m,k}$ is a graded Lie algebra since
\begin{equation}\label{e6}
[\mathfrak p(m,s),\mathfrak p(m,t)]\subseteq \mathfrak p(m,s+t)\quad \text{ and }\quad
\mg_{m,k}=\bigoplus_{s=1}^k \mathfrak p(m,s).\end{equation}

The central descending series of a free nilpotent Lie algebra verifies $$C^r(\mg_{m,k})=\oplus_{s=r+1}^k\mathfrak p(m,s).$$
This property follows from the fact that every  bracket of $r + 1$ elements of $\mg_{m,k}$, is a linear combination of brackets of $r + 1$ elements in
the Hall basis (see proof of Theorem 3.1 in \cite{Ha}) This implies $C^r(\mg_{m,k})\subseteq \oplus_{s=r+1}^k \mathfrak p(m, s)$; the
other inclusion is obvious. In particular, $\mathfrak p(m, k) = C^{k-1}(\mg_{m,k}) \subseteq \mathfrak z(\mg_{m,k})$.

\medskip

Denote as $d_m(s)$ the dimension of $\mathfrak p(m,s)$. Inductively one gets \cite{Se}
\begin{equation}\label{dim}s\cdot d_m(s)= m^s- \sum_{r|s, r<s} r\cdot d_m(r),\qquad s\geq 1.
\end{equation}

Hence for a fixed $m$, one has  $d_m(1)=m$ and $d_m(2)=m(m-1)/2$. 

\begin{example}
\label{ex:1} Let $\mg_{m,2}$ be the 2-step free nilpotent Lie algebra on $m$ generators and let $e_1,\ldots, e_m$ be a set of generators. The dimension of this Lie algebra is $d_m(1)+d_m(2)=m+m(m-1)/2$ by equations (\ref{e6}) and (\ref{dim}).

From the construction described above, a Hall basis  of $\mg_{m,2}$ is
\begin{equation}\label{ba2}
	\mathcal B=\{e_i,\,[e_j,e_k]:\,i=1,\ldots,m,\, 1\leq  k<j\leq m\}. 
	\end{equation}
	
The center of $\mg_{m,2}$ contains the first term of the central descending series $C^1(\mg_{m,2})=\mathfrak p(m,2)$, since the Lie algebra is 2-step nilpotent. Even more, $$\mathfrak z(\mg_{m,2}) = \mathfrak p(m, 2).$$
In fact, from (\ref{ba2}) above, any $x\in \mathfrak z(\mg_{m,2})$ can be written as
	$$x = \sum_{i=1}^m  x_ie_i + \sum_{m\geq i>j \geq 1} y_{ij} [e_i, e_j].$$ 	
The bracket $[x, e_1]$ is zero and equals $\sum_{i= 1}  x_i[e_i, e_1]$. This implies that 
$ x_i = 0$ for $i = 2, \hdots,m$. 
By taking the Lie bracket  $[x, e_2]$ it turns out that $x_1 = 0$ and therefore $x = 0$.
\end{example}

\begin{example}
	 \label{ex:2} For the free $3$-step nilpotent Lie algebra on $m$ generators $\mg_{m,3}$ the Hall basis associated to a set of generators has the form
	\begin{equation} \label{ba3}
\mathcal B=\{e_i,\,[e_j,e_k],\,[[e_r,e_s],e_t],\,i=1,\ldots,m,\,1\leq k< j\leq m,\,1\leq s<r\leq m,\,t\geq s\}.
	\end{equation}
	
By a similar procedure as that one in the previous example one reaches  $\mz(\mg_{m,3})=\mathfrak p(m,3)$.
Hence, by equation (\ref{dim}),
$$\dim\, \mz(\mg_{m,3})= d_m(3)=m(m^2-1)/3.$$
\end{example}

\section{Free nilpotent Lie algebras and symplectic structures}

In this section a necessary condition for a free nilpotent Lie algebra to admit a symplectic structure is proved. This condition relates the dimension of the center of $\mg_{m,k}$ with $m$, the amount of generators (see corollary \ref{cor1}). Afterwards, the proof of the Main Theorem is given. %Moreover, the dimension of the center of free nilpotent Lie algebras is computed to prove that it is not satisfied by the most part of the Lie algebras in that family.
\medskip

A Lie algebra $\mgg$ of even dimension $2n$ is called {\em symplectic} if it has a closed 2-form $\omega$ such that $\omega^n\neq 0$. Equivalently, $\omega$ as a skew symmetric bilinear form on $\mgg$ is non degenerate. 

\begin{example}\label{ex4} The one dimensional central extension of the free 2-step nilpotent Lie algebra on 2 generators, $\mg_{2,2}$, is symplectic. 

The dimension of $\mg_{2,2}$ is three (recall example \ref{ex:1}) and it is isomorphic to the Heisenberg Lie algebra. Its central extension $\mgg=\R\oplus \mg_{2,2}$ has a basis $\{e_1,e_2,e_3,e_4\}$ where the only non zero bracket is $[e_2,e_3]=e_4$. Let $\{e^i\}_{i=1}^4$ be the dual basis. Then the Maurer-Cartan formula asserts that the differential $d:\mgg^*\lra \Lambda^2\mgg^*$ behaves in that basis in the following way
$$d\,e^i=0,\;i=1,2,3, \quad d\,e^4=-e^2\wedge e^3. $$
It is easy to verify that $\omega=e^1\wedge e^2+e^3\wedge e^4$ is a symplectic structure on $\mgg$.
\end{example}

\begin{example}\label{ex:3}
%Let  $\mgg$ be the abelian Lie algebra of dimension $2n$ and $\{e^1,\ldots,e^{2n}\}$ a basis of one forms. Then the $2$-form $$\omega=e^1\wedge e^2+e^3\wedge e^4+\cdots +e^{2n-1}\wedge e^{2n} $$ is symplectic.
The $2$-step free nilpotent Lie algebra on three generators, $\mg_{3,2}$ is symplectic. 
By example \ref{ex:1}, the dimension of $\mg_{3,2}$ is six and it has a basis of the form $\{e_1,\ldots, e_6\}$  with non zero brackets $$[e_1,e_2]= e_4,\quad [e_1,e_3]= e_5,\quad [e_2,e_3]= e_6.$$

The differential of the Lie algebra in the dual basis $\{e^1,e^2,\ldots, e^6\}$ of one forms is
 $$\left\{\begin{array}{l}
de^4=-e^1\wedge e^2\\
de^5=-e^1 \wedge e^3\\
de^6=-e^2\wedge e^3\end{array}
\right.. $$

The $2$-form $\omega= e^1\wedge e^4+e^2\wedge e^6+ e^3\wedge e^5$  is closed and $\omega^3\neq 0$, hence it is a symplectic structure on $\mg_{2,3}$.

\end{example}

The follwoing necessary condition for nilpotent Lie algebras to admit  symplectic structures was given by Benson and Gordon in \cite{BE-GO}. Here we include the proof of Guan (\cite{GU}). 
%The set of left invariant symplectic forms on $N$ can be identified with
%$$\mathcal{S}(\mgg) = \left\{\sigma \in\Lambda^2\mgg^*:\; d\sigma=0\;\text{ and }\; \sigma^k\neq 0\right\}.$$

%This set, possibly empty, is an open subset of $\ker\, d :\Lambda^2\mgg^*\lra\Lambda^3\mgg^*$.

%In this section we derive the proof of the main theorem. This is done in several steps. First we give a necessary condition for a nilpotent Lie algebra to be equipped with a symplectic structure. Secondly we  prove that these conditions are sufficient.

\medskip
\begin{lem}\cite{BE-GO,GU}\label{lm1} Let $\omega$ be a symplectic structure on a nilpotent Lie algebra $\mgg$, then
\begin{equation} \label{eq:eq51} 
\dim\mz\,(\mgg)\leq \dim (\mg/C^1(\mgg)) .
\end{equation} 
\end{lem}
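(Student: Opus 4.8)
The plan is to obtain the inequality directly from the closedness of $\omega$; nilpotency actually plays no role, and the same argument works for an arbitrary symplectic Lie algebra. The key point is that $d\omega=0$ forces the center $\mz(\mgg)$ to lie inside the $\omega$-orthogonal complement of the commutator ideal $C^1(\mgg)=[\mgg,\mgg]$.

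First I would write down the Chevalley--Eilenberg formula for the differential of a $2$-form: for all $X,Y,Z\in\mgg$,
\[
d\omega(X,Y,Z)=-\omega([X,Y],Z)-\omega([Y,Z],X)-\omega([Z,X],Y).
\]
Next, fix $X\in\mz(\mgg)$. Since $[X,Y]=[Z,X]=0$, the identity $d\omega(X,Y,Z)=0$ reduces to $\omega([Y,Z],X)=0$. Letting $Y,Z$ vary over $\mgg$, the elements $[Y,Z]$ span $C^1(\mgg)$, so $\omega(v,X)=0$ for every $v\in C^1(\mgg)$; equivalently $X$ lies in $C^1(\mgg)^{\perp_\omega}:=\{u\in\mgg:\omega(u,C^1(\mgg))=0\}$. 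This proves the inclusion $\mz(\mgg)\subseteq C^1(\mgg)^{\perp_\omega}$.

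Finally I would invoke non-degeneracy of $\omega$: the linear map $u\mapsto\omega(u,\cdot)$ is an isomorphism $\mgg\to\mgg^*$ carrying $C^1(\mgg)^{\perp_\omega}$ onto the annihilator of $C^1(\mgg)$ in $\mgg^*$, whence $\dim C^1(\mgg)^{\perp_\omega}=\dim\mgg-\dim C^1(\mgg)=\dim(\mgg/C^1(\mgg))$. Combined with the inclusion from the previous step this yields $\dim\mz(\mgg)\le\dim(\mgg/C^1(\mgg))$.

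There is no genuine obstacle here: the entire content of the argument is the translation of ``$\omega$ is closed'' into ``$\mz(\mgg)$ is $\omega$-orthogonal to $C^1(\mgg)$'', after which the conclusion is a one-line dimension count for a non-degenerate bilinear form. The only thing requiring a little care is the bookkeeping of signs in the formula for $d\omega$, but since we only use that the two central terms drop out, every convention gives the same vanishing statement.
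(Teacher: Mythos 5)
Your proof is correct and follows essentially the same route as the paper: closedness of $\omega$ yields $\omega(z,[y_1,y_2])=0$ for central $z$, and non-degeneracy converts the resulting orthogonality between $\mz(\mgg)$ and $C^1(\mgg)$ into the dimension inequality. The only cosmetic difference is that you phrase the inclusion as $\mz(\mgg)\subseteq C^1(\mgg)^{\perp_\omega}$ while the paper writes the equivalent $C^1(\mgg)\subseteq\mz(\mgg)^{\omega}$.
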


\begin{proof}Let $\mz(\mgg)\,^\omega=\{x\in\mgg\,/\, \omega(x,z)=0,\;\forall \,z\in\mz\,(\mgg)\}$ be the orthogonal space of the center with respect to the symplectic structure. Since $\omega$ is non degenerate it follows, $\dim\mgg=\dim\mz(\mgg)+\dim\mz(\mgg)\,^\omega$. 
Moreover,  $\dim\mgg=\dim C^1(\mgg)+\dim (\mgg/C^1(\mgg))$. We claim that
\begin{equation}\label{eq:nodeg}
\dim C^1(\mgg)\leq \dim\mz(\mgg)^\omega .
\end{equation}

In fact, let $y=[y_1,y_2]$ in $C^1(\mgg)$. Since $\omega$ is closed, we have for any $z\in\mz(\mgg)$,
$$\omega(z,y)=\omega(z,[y_1,y_2])=\omega([z,y_2],y_1)+\omega([z,y_1],y_2)=0 .$$
Hence $C^1(\mgg)\subseteq\mz(\mgg)\,^\omega$ and  equation (\ref{eq:nodeg}) holds.

Since $\omega$ is non degenerate, we have
$$\dim\mz(\mgg)+\dim\mz(\mgg)^\omega=\dim C^1(\mgg)+\dim (\mg/C^1(\mgg)).$$ 
and together with equation (\ref{eq:nodeg}) we obtain the thesis.
\end{proof} 

\begin{remark}
Condition  (\ref{eq:eq51}) is not sufficient in general. In fact, any filiform Lie algebra $\mgg$ satisfies equation (\ref{eq:eq51}) since $\dim\,\mz\,(\mgg)=1$ and $\dim(\mgg/C^1(\mgg))=2$. Nevertheless, there are filiform Lie algebras admitting no symplectic structures (see for instance, \cite{BO-GO}). 

In \cite{PO-TI}, Poussele and Tirao showed that it is sufficient for the existence of symplectic structures in the family of nilpotent Lie algebras associated to graphs.
\end{remark}

Clearly, $\dim \left(\mg_{m,k}/C^1(\mg_{m,k})\right)=m$ for  $\mg_{m,k}$ the $k$-step free nilpotent Lie algebra on $m$ generators. Hence the  equation (\ref{eq:eq51}) gives the following corollary.

\begin{cor}\label{cor1} Let $\mg_{m,k}$ be the free $k$-step nilpotent Lie algebra on $m$ generators and consider
the Lie algebra $\mgg=\R^t\oplus \mg_{m,k}$ where $t=0$ or $t=1$ depending on whether $\dim \mg_{m,k}$ is even or odd.
If $\mgg$ admits a symplectic structure then 
\begin{equation}
\label{e4}
\dim \mz\,(\mg_{m,k}) \leq m.
\end{equation}
\end{cor}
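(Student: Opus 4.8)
The plan is to combine Lemma~\ref{lm1} with the structural description of the quotient $\mg_{m,k}/C^1(\mg_{m,k})$ for free nilpotent Lie algebras. Recall from the grading \eqref{e6} that $\mg_{m,k}=\bigoplus_{s=1}^k\mathfrak p(m,s)$ and from the explicit identification of the central descending series that $C^1(\mg_{m,k})=\bigoplus_{s=2}^k\mathfrak p(m,s)$. Hence the quotient $\mg_{m,k}/C^1(\mg_{m,k})$ is isomorphic to $\mathfrak p(m,1)$, which is spanned by the $m$ generators $e_1,\dots,e_m$; this gives $\dim\bigl(\mg_{m,k}/C^1(\mg_{m,k})\bigr)=d_m(1)=m$.

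Next I would pass to the Lie algebra $\mgg=\R^t\oplus\mg_{m,k}$. Since $\R^t$ is abelian and central, one has $C^1(\mgg)=C^1(\mg_{m,k})$ and therefore $\dim\bigl(\mgg/C^1(\mgg)\bigr)=t+m$. Moreover the center satisfies $\mz(\mgg)=\R^t\oplus\mz(\mg_{m,k})$, so $\dim\mz(\mgg)=t+\dim\mz(\mg_{m,k})$. The Lie algebra $\mgg$ is nilpotent (a direct sum of an abelian factor with a nilpotent one), so if it carries a symplectic structure, Lemma~\ref{lm1} applies and yields
\begin{equation*}
t+\dim\mz(\mg_{m,k})=\dim\mz(\mgg)\leq\dim\bigl(\mgg/C^1(\mgg)\bigr)=t+m,
\end{equation*}
and cancelling $t$ from both sides gives precisely \eqref{e4}, namely $\dim\mz(\mg_{m,k})\leq m$.

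This argument is essentially bookkeeping once Lemma~\ref{lm1} is in hand, so there is no serious obstacle; the only point requiring a small verification is that adjoining an abelian factor does not disturb either the first term of the central descending series or the center in a way that would spoil the cancellation of $t$. Both facts are immediate from the direct-sum decomposition, since brackets never mix the two factors. It is worth noting explicitly that the hypothesis on $t$ — namely $t=0$ when $\dim\mg_{m,k}$ is even and $t=1$ when it is odd — is exactly what makes $\dim\mgg$ even, which is a prerequisite for $\mgg$ to be symplectic at all; this is why the statement is phrased with that choice of $t$ rather than for arbitrary central extensions.
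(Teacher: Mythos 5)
Your proof is correct and follows essentially the same route as the paper: identify $\dim\bigl(\mg_{m,k}/C^1(\mg_{m,k})\bigr)=m$ from the grading, observe that adjoining the central abelian factor $\R^t$ adds $t$ to both $\dim\mz$ and $\dim(\mgg/C^1(\mgg))$, and apply Lemma~\ref{lm1} to $\mgg$ so that the $t$'s cancel. The paper merely splits into the cases $t=0$ and $t=1$ instead of carrying $t$ symbolically, which is an inessential difference.
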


\begin{proof} If $\dim\mg_{m,k}$ is even, then $\mgg=\mg_{m,k}$ and equation (\ref{e4}) follows directly from Lemma \ref{lm1}. Let $\mgg$ be the direct sum $\R\oplus \mg_{m,k}$ where $\mg_{m,k}$ has odd dimension. %Lemma \ref{cor1} applied to $\mgg$  asserts that if it admits symplectic structures then
%\begin{equation}\label{e5}
%\dim\,\mz\,(\mgg) \leq \dim(\mgg/C^1(\mgg)) .
%\end{equation}
In this case, $\dim\mz(\mgg)=\dim\mz(\mg_{m,k})+1$. Moreover,
$\dim(\mgg/C^1(\mgg))=$ $\dim(\mg_{m,k}/C^1(\mg_{m,k}))+1$ since $C^1(\mgg)=$$C^1(\mg_{m,k})$. Hence, condition (\ref{eq:eq51}) for $\mgg$ is equivalent to (\ref{e4}).
\end{proof}

This condition on the dimension of the center is very restrictive for free nilpotent Lie algebras because, except for small $m$ and $k$, the dimension of $\mz(\mg_{m,k})$ is much bigger than the size of the generator set. %The formalization of this idea is the proof oFormalizing this idea,  the following result is reached. This is what we prove in the following proposition.

\medskip
{\bf Proof of the Main Theorem.} 
%\begin{thm}\label{p1}  Let $\mg_{m,k}$ be the free $k$-step nilpotent Lie algebra on $m$ generators. % and having even dimension.
%Then $\mg_{m,k}$ admits symplectic structures if and only if $(m,k)=(3,2)$
%\end{thm}
Let $\mg_{m,k}$ denote the free $k$-step nilpotent Lie algebra on $m$ generators. We treat separately the cases by the nilpotency index $k$ of $\mg_{m,k}$.

$\bullet$  $k=2$. As shown in example \ref{ex:1}, the dimension of the center  $\mg_{m,2}$, is $m(m-1)/2$. Easy computations give that $\dim\,\mz(\mg_{m,2})>m$ except for $m=2$ and $m=3$. Hence, by (\ref{e4}) in Corollary \ref{cor1}, $\mg_{m,2}$ or its one dimensional central extensions are not symplectic for all $m\geq 4$. 

For $m=2$, the Lie algebra $\mg_{2,2}$ has dimension three. In example \ref{ex4} it was shown that its one dimensional central extension is symplectic.
 
The case $m=3$, the Lie algebra $\mg_{3,2}$ was treated in example \ref{ex:3} and it is also symplectic.
\smallskip

$\bullet$ $k=3$. Example \ref{ex:2} asserts that $\dim\,\mz(\mg_{m,3})=m(m^2-1)/3$. Then (\ref{e4}) does not hold for $m>2$. Therefore, $\mg_{m,3}$ and $\R\oplus \mg_{m,3}$ are not symplectic if $m>2$.

Despite the fact that $\dim\,\mz(\mg_{2,3})$ equals the amount of generators, the six dimensional Lie algebra $\mgg=\R\oplus \mg_{2,3}$ does not admit symplectic structures (see, for instance \cite{BO-GO}).
\medskip

To continue, we show that for any $k\geq 4$ and any $m\geq 2$ the dimension of $\mz\,(\mg_{m,k})$ is always grater than $m$. This fact together with the previous corollary imply that neither $\mg_{m,k}$ nor its one dimensional central extension is symplectic.

\smallskip

$\bullet$ $k= 4$. The subspace $\mathfrak p(m,4)$ is contained in $\mz(\mg_{m,4})$, thus from (\ref{dim}):
$$\dim\mz(\mg_{m,4})\geq d_m(4)=\frac{1}{4}(m^4-d_m(1)-2d_m(2))= \frac{m^2(m^2-1)}{4}.$$ 
Notice that $m^2(m^2-1)/4>m$ whenever $m\geq 2$.
\smallskip

$\bullet$ $k\geq 5$. It is possible to give a lower bound of $\dim\mz(\mg_{m,k})$ by constructing different elements of length $k$ in a Hall basis $\mathcal B$ of $\mg_{m,k}$. 

Let $\{e_1,\ldots,e_m\}$ a set of generators of $\mg_{m,k}$ and consider the set
\begin{eqnarray}
\mathcal U &=&\{[[[e_i,e_j],e_k],e_m]: 1\leq j<i\leq m,\,k\geq j\}.\nonumber  %\\
% \mathcal U_2&=& \{[[e_m,e_{m-1}],[e_i,e_j]]:m\geq i>j\geq 1,\, (i,j)\neq (m,m-1)\}.\nonumber 
\end{eqnarray}

Any element in $\mathcal U$ is basic and of length 4. Given $x\in \mathcal U$, the bracket $$[x,e_m]^{(s)}:= [[[x,\overbrace{e_m],e_m]\cdots , e_m]}^{s}  \;\;s\geq 1$$
is an element in the Hall basis if $\ell([x,e_m]^{(s)})\leq k$. 

In fact if $s=1$ then $[x,e_m]^{(1)}= [x,e_m]$ and it holds:
\begin{enumerate}
\item both $x=[[[e_i,e_j],e_k],e_m] \in \mathcal U$ and $e_m$ are elements of the Hall basis, and $x >e_m$ because of their length;
\item also $x=[G,H]$ with $G=[[e_i,e_j],e_k]$ and $H=e_m$ and we have $e_m\geq H$. 
\end{enumerate}

So both conditions of the Hall basis definition are satisfied. Hence 
 $[x,e_m]^{(1)}\in \mathcal B$ and it also belongs to $ C^4(\mg_{m,k})$.
 
 Inductively suppose  $[x,e_m]^{(s-1)}\in\mathcal B$, then clearly $[[[x,e_m],e_m]\cdots ], e_m]^{(s-1)}>e_m$ and it is possible to write $[x,e_m]^{(s-1)}=[G,H]$ with $H=e_m$. Thus $[x,e_m]^{(s)}\in\mathcal B$.  Notice that $[x,e_m]^{(s)}\in C^{s+3}(\mg_{m,k})$.

We construct the following set 
$$ \widetilde{\mathcal U}:=\{[x,e_m]^{(k-4)}:x\in \mathcal U\}\subseteq C^{k-1}(\mg_{m,k}).$$
It is contained in the center of $\mg_{m,k}$ and it is a linearly independent set. Therefore
\begin{equation}\label{cota}
\dim \mz(\mg_{m,k}) \geq |\widetilde{\mathcal U}|.
\end{equation}

Clearly $\widetilde{\mathcal U}$ and $\mathcal U$ have the same cardinal. Also, $|\mathcal U|=\sum_{j=1}^m(m-j+1)(m-j)$ since for every fixed $j=1,\ldots,m$,  the amount of possibilities to choose $k\geq j$ and $i>j$ is $(m-j+1)$ and $(m-j)$  respectively. 

Straightforward computations give $ |\widetilde{\mathcal U}|=1/3\, m^3+m^2+2/3\,m$ which toghether with (\ref{cota}) proves that for any $m$ and $k\geq 5$ $$\dim \mz(\mg_{m,k})\geq 1/3 \,m^3+m^2+2/3\,m .$$ The right hand side is greater than $m$ for all $m\geq 2$.

\smallskip

\begin{flushright}$\square$\end{flushright}

\begin{remark} Our Main Theorem here extends the results in \cite[Example 4.9]{DO-TI}. In fact, they prove the non existence of symplectic structures on 2-step free nilpotent Lie algebras with different techniques. Those do not apply for every degree of nilpotency.
\end{remark}

\medskip

%The Main Theorem can be stated for the nilmanifolds associated to free nilpotent Lie algebras.

%Malcev proved in \cite{MA} that a Lie group $G$ with Lie algebra $\mathfrak{g}$ having rational structure constants admits a discrete subgroup such that the quotient $M=\Gamma\backslash G$ is a compact manifold.

Denote by $N_{m,k}$ the simply connected Lie group corresponding to $\mg_{m,k}$, the free $k$-step nilpotent Lie algebra on $m$ generators. It is well known that the structure constants of $\mg_{m,k}$ relative to a Hall basis are rational (see for instance \cite{RE}). A result due to Malcev (\cite{MA}) asserts that the Lie group $N_{m,k}$ admits a cocompact discrete subgroup $\Gamma$. Recall the correspondence between symplectic structures on the nilmanifold $M=\Gamma\backslash N_{m,k}$ and symplectic structures on $\mg_{m,k}$. Therefore, the Main Theorem can be stated in terms of the nilmanifolds $N_{m,k}$, namely:
\smallskip

{\bf Theorem} {\em Let $N_{m,k}$ be the simply connected Lie group with Lie algebra $\mg_{m,k}$ and $\Gamma $ a cocompact subgroup.
\begin{enumerate}
\item If $\dim N_{m,k}$ is even then the nilmanifold $M=\Gamma\backslash N_{m,k}$ admits symplectic structures if and only if $(m,k)=(3,2)$.
\item If $\,\dim N_{m,k}$ is odd, the nilmanifold $S^1\times \Gamma\backslash N_{m,k}$ admits symplectic structures if and only if $(m,k)=(2,2)$.
\end{enumerate}}

\medskip
 {\bf Acknowledgments.} This paper is part of the author's Ph.D. thesis, written at FCEIA, Universidad Nacional de Rosario, Argentina and directed by Isabel Dotti. The author is grateful to Isabel Dotti for all the supervision work and to Gabriela Ovando for useful comments and suggestions.

%\bibliographystyle{alpha}

%\bibliography{biblio}

\end{document}